\date{}
\theoremstyle{plain}
\newtheorem{theorem}{Theorem}
\newtheorem{lemma}{Lemma}
\newtheorem{proposition}{Proposition}
\theoremstyle{definition}
\theoremstyle{remark}
\newtheorem*{example}{Example}
\newtheorem*{remark}{Remark}
\def\C{{\mathbb C}}
\def\N{{\mathbb N}}
\def\Z{{\mathbb Z}}
\def\Q{{\mathbb Q}}
\def\R{{\mathbb R}}
\title{Scissor Equivalence for Torus links} 
\author{Sebastian Baader}
\begin{document}

\begin{abstract} This article is about a natural distance function induced by smooth cobordisms between links. We show that the cobordism distance of torus links is determined by the profiles of their signature functions, up to a constant factor.
\end{abstract}

\maketitle

\section{Introduction}

The Thom conjecture asserts that algebraic curves in $\C^2$ are genus-minimising: the intersection of a smooth algebraic curve defined by a polynomial $f \in \C[x,y]$ with the closed unit ball in $\C^2$ has minimal genus among all smoothly embedded surfaces with the same boundary link on the unit sphere, provided the intersection of the curve with the unit sphere is transverse. More generally, the transverse intersection of a smooth algebraic curve with the compact domain enclosed by two spheres of different radii in $\C^2$ is a smooth cobordism of minimal genus between the two boundary links. In general, two algebraic links are not connected by an algebraic cobordism, as we will shortly see. However, we may still look for a minimal smooth cobordism between pairs of algebraic links. Let us define the cobordism distance $d_{\chi}(K,L)$ between two oriented links $K$, $L \subset S^3$ as the maximal Euler characteristic among all smooth cobordisms without closed components in $S^3 \times [0,1]$ connecting the links $K$ and $L$. The main goal of this paper is to determine the cobordism distance between torus links, up to a constant factor.

\begin{theorem} There exists a constant $\gamma \geq 1$, such that the following inequalities hold for almost all pairs of torus links $T(a,b)$, $T(c,d)$:
$$\tau(T(a,b), T(c,d)) \leq d_{\chi}(T(a,b), T(c,d)) \leq \gamma \, \tau(T(a,b), T(c,d)).$$
\end{theorem}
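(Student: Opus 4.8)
The statement is a conjunction of two inequalities of very different character, and I would attack them separately. Writing $\tau(K,L)$ for the signature-profile distance --- which I read as a normalized average (or maximum) of the pointwise differences of the Tristram--Levine signature functions $t \mapsto \sigma_K(e^{2\pi i t})$ and $t \mapsto \sigma_L(e^{2\pi i t})$, i.e.\ essentially the area between the two signature profiles --- the left-hand inequality $\tau \le d_\chi$ is the soft cobordism-invariance of signatures, while the right-hand inequality $d_\chi \le \gamma\,\tau$ requires one to build efficient cobordisms and is where the work lies. Throughout I record the cobordism distance as the minimal complexity $-\chi(\Sigma)$ of a cobordism $\Sigma$, so that both sides of the displayed inequality are non-negative quantities of the same order.

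For the lower bound I would invoke cobordism invariance of the Tristram--Levine signatures. If $\Sigma \subset S^3 \times [0,1]$ is a cobordism from $K$ to $L$ without closed components, then for every $\omega$ on the unit circle outside the finite set of roots of the relevant Alexander polynomials one has the pointwise estimate $|\sigma_K(\omega) - \sigma_L(\omega)| \le -\chi(\Sigma)$, up to the standard correction by the nullity, which is controlled by the number of link components. Because $\tau$ is built from these pointwise differences by averaging, integrating this inequality over the circle gives $\tau(K,L) \le -\chi(\Sigma)$ for every such $\Sigma$; minimizing over $\Sigma$ yields $\tau(K,L) \le d_\chi(K,L)$. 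This step uses nothing about torus links beyond the finiteness of the exceptional set.

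For the upper bound I would construct explicit cobordisms from the braid picture. Realizing $T(a,b)$ as the closure of the positive braid $(\sigma_1 \cdots \sigma_{a-1})^b$, the elementary cobordisms are the saddle moves that delete or insert braid letters: erasing one full block $\sigma_1 \cdots \sigma_{a-1}$ passes from $T(a,b)$ to $T(a,b-1)$ through $a-1$ saddles, so $d_\chi(T(a,b),T(a,b-1)) \le a-1$, and symmetrically for the number of strands. The plan is to join $T(a,b)$ to $T(c,d)$ along a monotone path of unit steps in the $(a,b)$-lattice and to sum these elementary contributions. The quantitative input that makes this match $\tau$ is the explicit asymptotics of torus-link signatures --- computed via Litherland's lattice-point count and, in the large-parameter limit, converging after rescaling to a universal Gambaudo--Ghys profile with total signature and profile area both of order $ab$. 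Along one lattice step the profile area changes by an amount comparable to the step cost, so a path chosen to move the profile monotonically produces a cobordism with $-\chi$ bounded by a universal multiple of the total profile displacement, namely $\gamma\,\tau(T(a,b),T(c,d))$.

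The hard part is precisely this last matching, and it carries two difficulties. First, one must choose the lattice path so that the increments of the signature profile add rather than cancel; a careless path can have total saddle cost far exceeding $\tau$, since the profile displacement is the net change while the cost is the total change. Second, the per-step comparison ``profile area change $\asymp$ strand number'' must hold with a constant independent of $a,b,c,d$, which is exactly what fails for torus links with a small parameter, where the rescaled profile has not yet stabilized --- this is the source of the qualifier ``almost all pairs,'' forcing one to discard finitely many degenerate families. I expect the cleanest organization is to first bring both links, by cobordisms of controlled cost, into a one-parameter subfamily of fixed ratio $a/b$ along which $d_\chi$ and $\tau$ are both asymptotically linear and monotone, reducing the two-sided estimate to a single-variable comparison from which the universal constant $\gamma$ can be extracted.
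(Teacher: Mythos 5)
Your lower bound is essentially the paper's (cobordism invariance of the Tristram--Levine signatures), though note two points: the paper's $\tau$ is the \emph{maximum} of $|\chi(K)-\chi(L)|$ and the pointwise signature differences, not an average or an area between profiles, so no integration is involved --- one simply takes the worst $\omega$; and you omit the term $|\chi(K)-\chi(L)|\le d_\chi(K,L)$, which is part of $\tau$ and requires the Thom conjecture to pin down $\chi(T(p,q))$.

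The upper bound is where your proposal has a genuine gap. A monotone path of unit lattice steps, with each step realized by deleting a block of braid letters, cannot give a cost comparable to $\tau$; the obstruction is not merely a careless choice of path. Consider $T(2n,n+1)$ and $T(n,2n+2)$: here $\tau=d_\chi=1$ (this is the special case of Proposition~1 of the paper), yet any unit-step path in the $(p,q)$-lattice from $(2n,n+1)$ to $(n,2n+2)$ must change the first parameter by $n$, and each such step costs at least $q-1\ge n$ saddles, so every lattice path has total cost $\Theta(n^2)$. The point is that the \emph{net} change of $\chi$ and of the signature profile between the endpoints can be bounded while every unit-step path accumulates an unbounded \emph{total} variation; no reordering of the steps fixes this. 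The missing ingredient is the paper's third scissor cobordism (Proposition~1): a cobordism between $T(ab,c)$ and $T(a,bc)$ of Euler characteristic exactly $-(b-1)|c-a|$, obtained by smoothing crossings along $b-1$ vertical lines of the standard diagram. This is a long diagonal move in the lattice at the minimal possible cost, and the paper's proof runs the chain $T(c,d)\to T(c,ka)\to T(kc,a)\to T(b,a)$ (with $d=ka+r$), using this move for the middle step. Without it, or some equally cheap way to change the braid index, the factor between your construction's cost and $\tau$ grows without bound. A secondary omission: the case $c=a+1$ genuinely requires the full spectrum of $\omega$-signatures (because the asymptotic slopes $\sigma(T(p,n))/\chi(T(p,n))$ coincide for $p$ and $p+1$ when $p$ is odd), whereas for $c\ge a+2$ the classical signature together with $\Delta\chi$ suffices; your sketch gestures at the profile but does not isolate where it is actually needed.
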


Here the quantity $\tau$ is defined for all pairs of oriented knots $K$ and $L$, as follows:
$$\tau(K,L)=\max\{|\chi(K)-\chi(L)|,|\sigma_{\omega}(K)-\sigma_{\omega}(L)||\omega \in S^1 \},$$
where $\chi$ and $\sigma_\omega$ denote the maximal Euler characteristic and the signature functions, respectively. These will be discussed in Section~3.
Throughout this paper, we will often have to exclude a few exceptional torus links, finite in number for each braid index. It is in this sense that the expression `for almost all torus links' is to be understood. It is actually possible to write out the exceptional torus links and the constant $\gamma$ of Theorem~1, but this would be at the expense of legibility. We do not know if there exists a constant such that the statement is true for all torus links.  

The proof of Theorem~1 is based on a construction of effective cobordisms. For this purpose, we will introduce three special types of smooth cobordisms called scissor cobordisms between torus links. Two of them, cutting and gluing, enable us to give a coarse estimate for the cobordism distance, which may be thought of as a relative version of the Thom conjecture. Let us define a function $f: \N^4 \to \N$ by the equation
$$d_{\chi}(T(a,b), T(c,d))=|\chi(T(a,b))-\chi(T(c,d))|+f(a,b,c,d).$$
It is easy to see that the function $f$ is bounded above by a quadratic expression. There is even a linear bound.

\begin{theorem} The following inequalities hold for all $a,b,c,d \in \N$:
$$0 \leq f(a,b,c,d) \leq 2(a+b+c+d).$$
\end{theorem}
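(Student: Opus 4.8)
The plan is to treat the two inequalities separately; the lower bound is soft, while the upper bound is where the scissor cobordisms do the work.

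For the lower bound $f(a,b,c,d)\ge 0$ I would argue by capping. Given any smooth cobordism $\Sigma$ from $K$ to $L$ without closed components, glue to its $L$-boundary a Seifert surface $F_L$ realising the maximal Euler characteristic $\chi(L)$. Since the gluing takes place along the circles $L$, Euler characteristics add, and $\Sigma\cup F_L$ is an embedded surface bounding $K$; hence $\chi(\Sigma)+\chi(L)=\chi(\Sigma\cup F_L)\le\chi(K)$, that is $\chi(\Sigma)\le\chi(K)-\chi(L)$. Capping the other end of $\Sigma$ symmetrically gives $\chi(\Sigma)\le\chi(L)-\chi(K)$, so $\chi(\Sigma)\le-|\chi(K)-\chi(L)|$ for every admissible $\Sigma$. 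Since $d_{\chi}$ records the optimal such $\Sigma$, the displayed identity $d_{\chi}=|\chi(K)-\chi(L)|+f$ forces $f\ge 0$.

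For the upper bound I would construct an explicit cobordism and estimate its Euler characteristic. The basic pieces are the two optimal grid cobordisms: reading $T(a,b)$ as the closure of $(\sigma_1\cdots\sigma_{a-1})^b$ on $a$ strands, or of $(\sigma_1\cdots\sigma_{b-1})^a$ on $b$ strands, one passes from $T(a,b)$ to $T(a,b\pm 1)$ (respectively $T(a\pm 1,b)$) by a sequence of oriented saddle moves whose cost in $-\chi$ equals the corresponding jump of the invariant $\chi(T(x,y))=x+y-xy=1-(x-1)(y-1)$. Concatenating such moves along a lattice path from $(a,b)$ to $(c,d)$ yields a cobordism whose complexity is the total variation of $\chi$ along the path. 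When the two coordinates move the same way, say $a\le c$ and $b\le d$, one may choose a monotone path; the total variation then equals $|\chi(T(a,b))-\chi(T(c,d))|$, and combined with the lower bound this gives $f=0$.

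The whole difficulty is the mixed case, where one braid index must increase while the other decreases (for instance $T(202,50)$ versus $T(100,100)$, which have nearly equal complexity). Because $\chi$ is governed by the product $(x-1)(y-1)$, every monotone lattice path is then forced to route through a torus link strictly simpler than both endpoints, and the resulting overshoot has the quadratic size $2\min\{(a-c)(b-1),(d-b)(c-1)\}$, far larger than $2(a+b+c+d)$; the diagonal symmetry $T(x,y)=T(y,x)$ does not remove the conflict. This is exactly where the cutting and gluing scissor cobordisms must enter: they provide efficient, non-grid cobordisms that trade braid index against twisting directly, connecting two genuinely different complex torus links at a cost linear in the braid data. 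I would use them to replace the wasteful leg of the lattice path, reducing the general situation to a comparable (monotone) one at an extra cost proportional to the braid indices of the four parameters — a cut of an $n$-strand closure costing on the order of $n$, applied to both links in both braid readings, which I expect to come out at most $2(a+b+c+d)$. Producing these scissor cobordisms and verifying that their Euler-characteristic cost is genuinely linear rather than quadratic is the main obstacle, and the real content of the theorem.
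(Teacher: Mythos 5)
Your lower bound is fine: capping a cobordism with a surface realising the maximal Euler characteristic and invoking the Thom conjecture for $\chi(T(p,q))=-pq+p+q$ is exactly the argument the paper uses (modulo the footnoted caveat about closed components, which is harmless for torus links). Your ``nested'' case of the upper bound is also correct and matches the paper: when one rectangle fits inside the other, smoothing $(c-1)(d-a)+(a-1)(c-b)$ crossings realises $f=0$ on the nose.

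The gap is the mixed case, and you say so yourself: ``producing these scissor cobordisms and verifying that their Euler-characteristic cost is genuinely linear rather than quadratic is the main obstacle, and the real content of the theorem.'' That content is precisely what is missing from your write-up, and the lattice-path framing does not supply it --- no sequence of single-step moves $T(x,y)\to T(x,y\pm1)$ can avoid the quadratic overshoot you correctly diagnose. The missing idea is a \emph{splitting} cobordism rather than a path: a torus link $T(x+y,z)$ can be separated into $T(x,z)\sqcup T(y,z)$ by smoothing only $z$ crossings, i.e.\ at linear cost $z$. With $a\le d$ and $b\ge c$, one splits off the \emph{same} link $T(a,c)$ from both ends --- from $T(a,b)$ at cost $a$, leaving $T(a,b-c)$, and from $T(c,d)$ at cost $c$, leaving $T(c,d-a)$ --- and then recurses on the pair $T(a,b-c)$, $T(c,d-a)$, whose parameter sum has dropped by $a+c$. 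The bookkeeping that makes the total linear is the triangle-type estimate $\bigl|\,|\chi(T(a,b-c))-\chi(T(c,d-a))|-|\chi(T(a,b))-\chi(T(c,d))|\,\bigr|\le a+c$, so each step spends at most $2(a+c)$ of the budget $2(a+b+c+d)$, exactly the amount by which the budget shrinks. Without this specific decomposition and the induction on $a+b+c+d$, your proposal is an accurate description of the problem rather than a proof of the bound.
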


Surprisingly, the lower bound is sharp for all pairs of torus links of type $T(ab,c)$, $T(a,bc)$, i.e. $f(ab,c,a,bc)=0$ (Proposition~1, Section~2). This fact involves the third scissor cobordism just mentioned, and is the key ingredient in the proof of Theorem~1. 

Sections~2 and~3 are devoted to the proofs of Theorems~1 and~2, in the reverse order. In Section~4 we exhibit two families of pairs of torus links showing that there is essentially no better linear bound than the one of Theorem~2 for the function $f$. We conclude with an interesting application of Theorem~2 concerning the stable $4$-genus of knots, defined by Livingston in~\cite{Li}. The stable $4$-genus defines a semi-norm on the knot concordance group. In Section~5 we will see that the restriction of this semi-norm to the span of pairs of torus knots has extremely flat unit balls.

\section{Scissor equivalence}

Torus links are prototypes of links of isolated singularities. They can be described as intersections of plane algebraic curves of the form $\{(z,w) \in \C^2| z^p-w^q=0\}$ with the unit sphere in $\C^2$. A slight perturbation of these curves will not affect the corresponding link types, denoted by $T(p,q)$. Thus torus links bound pieces of smooth algebraic curves in the $4$-ball. The Euler characteristic of these pieces of curves equals $-pq+p+q$ and is known to be maximal among all smooth oriented surfaces without closed components in the $4$-ball bounding the same link, due to the Thom conjecture \cite{KM}, \cite{Ra}. As we mentioned in the introduction, two algebraic links are not necessarily connected by an algebraic cobordism in $S^3 \times [0,1]$. For example, the two torus knots $T(2,13)$ and $T(4,5)$ have minimal genus $6$, thus an algebraic cobordism connecting these would have genus zero. This is impossible since their signature invariants are not equal.

As the title indicates, we will use scissor equivalence techniques for rectangles in order to construct cobordisms between torus links. All these cobordims will be compositions of simple $1$-handles of two types, as shown in Figure~1. We call the corresponding moves smoothing resp. saddle move.

\begin{figure}[ht]
\scalebox{0.8}{\raisebox{-0pt}{$\vcenter{\hbox{\epsffile{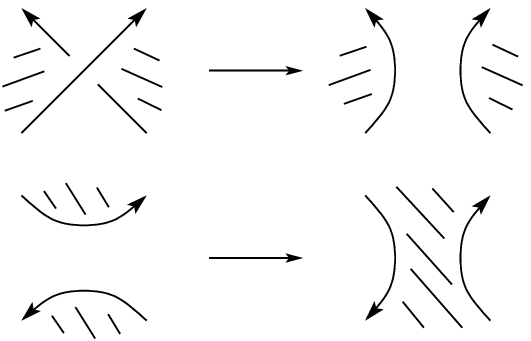}}}$}} 
\caption{}
\end{figure}

Every $1$-handle contributes $-1$ to the Euler characteristic of a cobordism. As an application, we may split a torus link of type $(a+b,c)$ into the disjoint union of two torus links of type $(a,c)$ and $(b,c)$  by a cobordism of Euler characteristic $-c$. Figure~2 illustrates this fact for the case $a=3$, $b=4$, $c=6$, where $6$ crossings have to be smoothed between the third and the fourth strand. All braids are to be closed in the standard way. 

\begin{figure}[ht]
\scalebox{0.8}{\raisebox{-0pt}{$\vcenter{\hbox{\epsffile{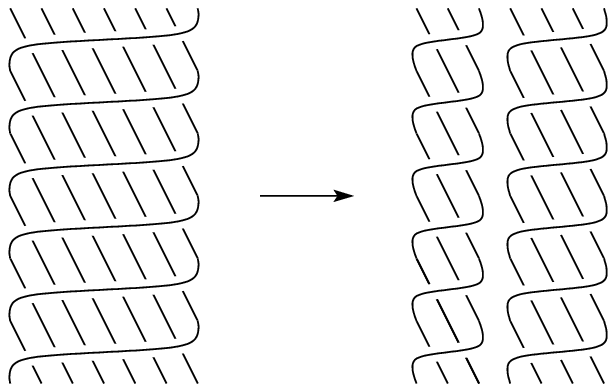}}}$}} 
\caption{}
\end{figure}

\begin{proof}[Proof of Theorem 2]
The cobordism distance of two links $K,L \subset S^3$ without slice components is bounded below by the difference of their Euler characteristics: \footnote{Brendan Owens pointed out that the triangle inequality does not hold for the cobordism distance in the presence of slice components, since in that case the composition of two cobordisms may contain spheres. This cannot happen for torus links, as their components are positively linked.}
$$|\chi(K)-\chi(L)| \leq d_{\chi}(K,L).$$
Combining this with the Thom conjecture, 
$$\chi(T(p,q))=-pq+p+q,$$
we obtain the first inequality of Theorem~2: 
$$|-ab+a+b+cd-c-d| \leq d_{\chi}(T(a,b), T(c,d)).$$

The second inequality will be proved by induction on $a+b+c+d \in \N$. The starting case $a=b=c=d=1$ is trivial.
Without loss of generality, we may suppose $a \leq d$. According to the relative sizes of $b$ and $c$, we will distinguish two cases:

\medskip
1. $b<c$ \quad \scalebox{1.0}{\raisebox{-22pt}{$\vcenter{\hbox{\epsffile{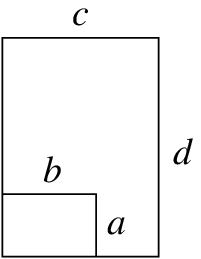}}}$}} 

\medskip
In this case the torus link $T(a,b)$ can be obtained from the standard diagram of the torus link $T(c,d)$ by smoothing an appropriate part of the crossings, as shown in Figure~3 for $a=3$, $b=4$, $c=7$, $d \geq 3$ arbitrary. The number of crossings to be smoothed equals
$$(c-1)(d-a)+(a-1)(c-b)=-ab+cd+a+b-c-d,$$
which is precisely the lower bound.

\begin{figure}[ht]
\scalebox{0.8}{\raisebox{-0pt}{$\vcenter{\hbox{\epsffile{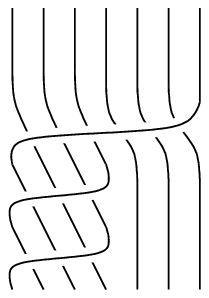}}}$}} 
\caption{}
\end{figure}

\medskip
2. $b \geq c$ \quad \scalebox{1.0}{\raisebox{-26pt}{$\vcenter{\hbox{\epsffile{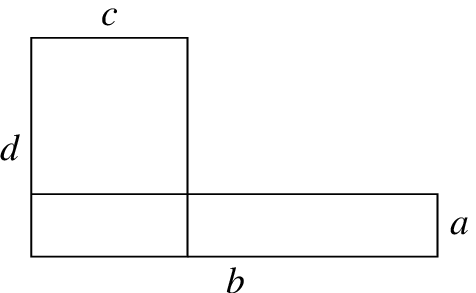}}}$}} 

\medskip
Here we may split off a torus link of type $(a,c)$ from the links $T(a,b)$ and $T(c,d)$ by a cobordism of Euler characteristic $-a$ resp. $-c$. The remaining links are torus links of type $(a,b-c)$ resp. $(c,d-a)$. Using the inductive hypothesis, the cobordism distance of the links $T(a,b-c)$ and $T(c,d-a)$ is bounded above by
$$A'+2(a+b-c+c+d-a)=A'+2b+2d,$$
where
\begin{eqnarray*}
A' &=& |(a-1)(b-c-1)-(c-1)(d-a-1)| \\
   &=& |(a-1)(b-1)-(c-1)(d-1)+c-a| \\
   &\leq& A+a+c. \\
\end{eqnarray*}

Altogether, there is a cobordism of Euler characteristic of absolute value at most
$$A'+2b+2d+a+c \leq A+2(a+b+c+d)$$
between the links $T(a,b)$ and $T(c,d)$.
\end{proof}

In view of the Milnor conjecture, it is natural to ask whether the estimates of Theorem~2 hold for the Gordian distance of torus knots, i.e. the minimal number of crossing changes needed to pass from one torus knot to another~\cite{HU}. This may be more difficult since the cobordism distance of two knots provides a lower bound~\cite{Ka}, but not an upper bound, for their Gordian distance. We do not even know if the correction term (corresponding to $f$ in Theorem~2) is a sub-quadratic function for the Gordian distance of torus knots.

The following proposition is a sharpened version of Theorem~2 for special torus links.

\begin{proposition} For all $a,b,c \in \N$
$$d_{\chi}(T(ab,c), T(a,bc))=|\chi(T(ab,c))-\chi(T(a,bc))|=(b-1)|c-a|.$$
\end{proposition}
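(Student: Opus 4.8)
The plan is to treat the two displayed equalities separately. The right-hand equality is a direct computation from the Thom-conjecture formula $\chi(T(p,q)) = -pq + p + q$ quoted above: one finds $\chi(T(ab,c)) - \chi(T(a,bc)) = (ab+c) - (a+bc) = (b-1)(a-c)$, so that $|\chi(T(ab,c)) - \chi(T(a,bc))| = (b-1)|c-a|$. The inequality ``$\geq$'' in the middle is then immediate from the general lower bound $|\chi(K)-\chi(L)| \leq d_{\chi}(K,L)$ proved at the start of Theorem~2, since torus links have no slice components. Everything therefore reduces to the \emph{upper} bound: I must produce a cobordism between $T(ab,c)$ and $T(a,bc)$ whose Euler characteristic has absolute value exactly $(b-1)|c-a|$. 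Using the symmetry that interchanging $a$ and $c$ swaps the pair $T(ab,c) \leftrightarrow T(a,bc)$, I may assume $c \geq a$ and aim for a cobordism with $-\chi = (b-1)(c-a)$.

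The first point to record is that the cutting and gluing cobordisms of Theorem~2 do \emph{not} suffice on their own. Splitting $T(ab,c)$ into $b$ copies of $T(a,c)$ costs $(b-1)c$ handles, and reassembling $b$ copies of $T(a,c)=T(c,a)$ into $T(a,bc)=T(bc,a)$ costs $(b-1)a$ handles, for a total of $(b-1)(a+c)$; even the most economical telescoping through the disjoint unions $T(a(b-k),c) \sqcup T(a,kc)$ overpays by $2a$, because the $\chi$-difference across the initial split is forced to be $c$ and across the final gluing to be $a$. The reason is structural: detaching a block $T(a,c)$ along its length-$c$ side and reattaching it along its length-$a$ side pays for \emph{both} interfaces, whereas the optimal cobordism should pay only the net amount $c-a$. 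In the rectangular picture underlying the title, $T(ab,c)$ and $T(a,bc)$ correspond to an $ab \times c$ and an $a \times bc$ rectangle of equal area $abc$, each cut into $b$ unit cells of size $a \times c$; these sit in a row with $b-1$ interfaces of length $c$ in the first case, and in a column with $b-1$ interfaces of length $a$ in the second.

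The construction I would give realizes the passage between these two configurations by a \emph{staircase} (shear) scissor cobordism that slides a unit cell into place while \emph{reusing} its contact, rather than fully detaching it. Concretely, a single such move cuts along a diagonal staircase separating one $a \times c$ cell from a neighbour and re-glues along the complementary staircase, converting one length-$c$ interface into one length-$a$ interface; the net number of crossings to be smoothed is the difference $c-a$ of the two interface lengths. Since there are $b-1$ interfaces to convert, concatenating these $b-1$ moves yields a cobordism from $T(ab,c)$ to $T(a,bc)$ with $-\chi = (b-1)(c-a)$, matching the lower bound and giving equality. This is precisely the third scissor cobordism advertised in the introduction: unlike the pure cut and glue moves, it is not a composition of the two elementary handles of Figure~1 taken separately, and in particular it does \emph{not} factor through the disjoint unions $T(a(b-k),c) \sqcup T(a,kc)$, which is why the naive approach overpays by $2a$.

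The main obstacle is exactly the verification that this staircase move has net cost $c-a$ rather than $c+a$. I would need to exhibit the explicit diagram showing that only $c-a$ crossings are severed as a cell slides past its neighbour, and to check that the partially-slid intermediate stages are genuine (non-torus, connected) links through which the cobordism passes without accumulating spurious handles, so that the total handle count equals the Euler-characteristic lower bound rather than merely being bounded by it. Equivalently, the delicate step is to confirm that the optimal ``transfer'' cobordism exists at all; once its cost is pinned to the interface difference $c-a$, the count of $b-1$ such moves together with the matching lower bound closes the argument.
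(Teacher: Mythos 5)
Your reduction of the statement is correct and matches the paper's: the second equality is the computation $\chi(T(ab,c))-\chi(T(a,bc))=(ab+c)-(a+bc)=(b-1)(a-c)$, the lower bound for $d_\chi$ follows from $|\chi(K)-\chi(L)|\leq d_\chi(K,L)$, and everything hinges on exhibiting a cobordism of Euler characteristic $-(b-1)(c-a)$ (assuming $c\geq a$). But that exhibition is precisely what you have not done, and you say so yourself: you call the verification that your ``staircase move'' costs $c-a$ rather than $c+a$ ``the main obstacle'' and defer it. Since the entire non-trivial content of the proposition is this one construction, the proposal as written is a plan, not a proof. The paper closes this gap concretely: in the standard closed-braid diagram of $T(ab,c)$ one smooths $(b-1)(c-a)$ properly chosen crossings, namely $c-a$ of the $c$ crossings sitting along each of the $b-1$ vertical lines separating consecutive blocks of $a$ strands, and checks (Figure~4 of the paper) that the resulting link is $T(a,bc)$. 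Your picture of converting each of the $b-1$ length-$c$ interfaces into a length-$a$ interface at net cost $c-a$ is the right intuition for why this works, but the proposition is not proved until the specific crossings are named and the resulting link is identified.

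Two smaller corrections. First, your claim that the third scissor cobordism ``is not a composition of the two elementary handles of Figure~1'' is wrong: it is exactly a composition of $(b-1)(c-a)$ smoothing moves, i.e.\ of the elementary $1$-handles; what distinguishes it from cut-and-reglue is only that it does not factor through the split links $T(a(b-k),c)\sqcup T(a,kc)$. Second, your worry about ``accumulating spurious handles'' at intermediate stages is misplaced: each smoothing contributes exactly $-1$ to the Euler characteristic by construction, so once the set of crossings to be smoothed is fixed, the cost of the cobordism is automatic. The only thing that genuinely requires verification is that the terminal link of the sequence of smoothings is $T(a,bc)$ --- which is the step you left open.
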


As a special case, for all $n \in \N$,
$$d_{\chi}(T(2n,n+1),T(n,2n+2))=1.$$
These are possibly the only pairs of torus links whose cobordism distance is one, apart from the obvious families $T(2,n)$, $T(2,n+1)$.

\begin{proof}[Proof of Proposition~1] Let us assume $c>a$. The maximal Euler characteristic of the knots $T(ab,c)$ and $T(a,bc)$ is $-abc+ab+c$ and $-abc+a+bc$, respectively. The absolute value of the difference of these numbers is precisely $(b-1)(c-a)$, which gives the desired lower bound for the distance $d_{\chi}(T(ab,c), T(a,bc))$. A cobordism of maximal Euler characteristic $-(b-1)(c-a)$ can be constructed by smoothing $(b-1)(c-a)$ properly chosen crossings along $(b-1)$ vertical lines of the standard diagram of the knot $T(ab,c)$. This is illustrated in Figure~4 for the two triples $(a,b,c)=(3,2,7)$ and $(a,b,c)=(2,3,7)$ on the left and right, respectively.
\end{proof}

\begin{figure}[ht]
\scalebox{0.8}{\raisebox{-0pt}{$\vcenter{\hbox{\epsffile{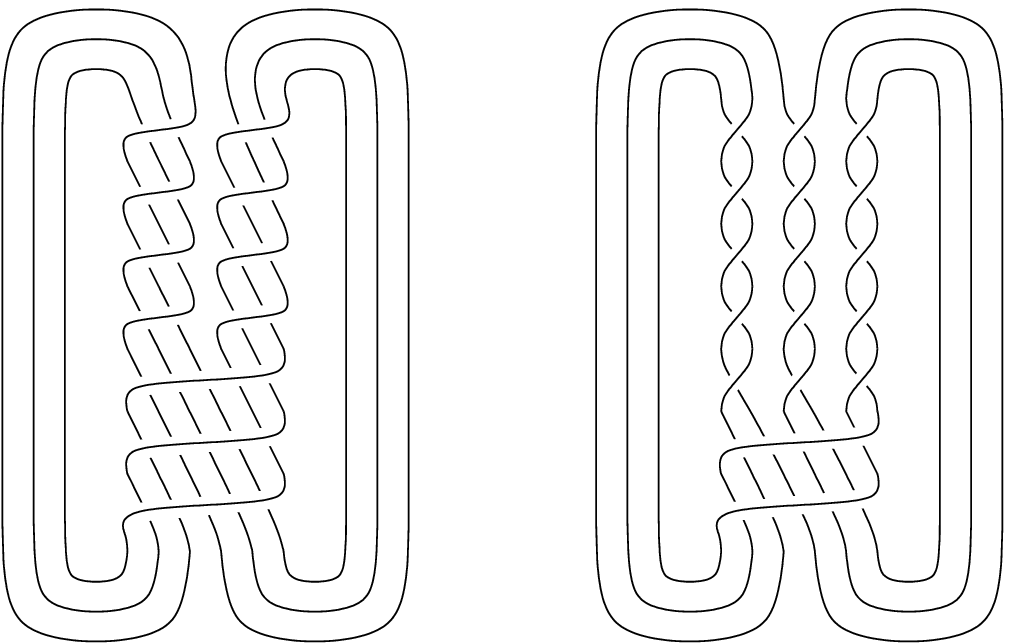}}}$}} 
\caption{}
\end{figure}

\begin{remark} It is a curious fact that the difference of the maximal Euler characteristics of the links $T(ab,c)$ and $T(a,bc)$ coincides with the difference of the sums of their parameters:
$$|\chi(T(ab,c))-\chi(T(a,bc))|=|ab+c-a-bc|.$$
\end{remark}

\section{Signatures and the cobordism distance}

The classical signature invariant can be generalised to a $1$-parameter family of invariants called Levine-Tristram signatures, via weighted Seifert matrices (\cite{Le}, \cite{Tr}). More precisely, let $\omega \in S^1 \subset \C$ be a complex number of modulus $1$ and let $M$ be a Seifert matrix of a link $L$. Then $\sigma_{\omega}(L)$ is defined as the signature of the matrix
$$(1-\omega)M+(1-\overline \omega)M^T.$$ 
The Levine-Tristram signatures may be viewed as an integer-valued step function on the unit circle with discontinuities at the roots of the Alexander polynomial of the link $L$. In the case of torus links, there is a formula for these signatures, going back to Brieskorn (\cite{Br}, see also \cite{GG}).

Let $p,q \in \N$ be natural numbers and $\omega=\exp(2 \pi i \theta)$ be a root of unity with $0<\theta<1$. Then
\begin{equation}
\sigma_{\omega}(T(p,q))=\sum_{\genfrac{}{}{0pt}{}{1 \leq x \leq q-1}{1 \leq y \leq p-1}} \epsilon_{\theta}(x,y),
\label{signature}
\end{equation}
where
$$\epsilon_{\theta}(x,y)=
\begin{cases}
+1 & \text{if $0<\theta+\frac{x}{q}+\frac{y}{p}<1 \pmod 2$,} \\
-1 & \text{if $1<\theta+\frac{x}{q}+\frac{y}{p}<2 \pmod 2$,} \\
\phantom{+} 0 & \text{if $\theta+\frac{x}{q}+\frac{y}{p} \in \Z$.} 
\end{cases}
$$

In particular, $\sigma_{\omega}(T(p,q))=0$ if $0<\theta<\frac{1}{pq}$, and the first jump of $\sigma_{\omega}$ takes place at $\theta=\frac{1}{pq}$.

Exactly as for the classical signature (which corresponds to $\omega =-1$), every $\omega$-signature yields a bound for the cobordism distance of pairs of links $L_1, L_2$ (see~\cite{Mu} for the classical signature; \cite{Tr} for $\omega$-signatures):
$$d_{\chi}(L_1,L_2) \geq |\sigma_{\omega} (L_1)-\sigma_{\omega}(L_2)|.$$

In order to be close with respect to the cobordism distance, two torus links must have similar step functions and $4$-genera. Let us recall that the the quantity $\tau$ is defined for pairs of links $K$, $L$ as 
$$\tau(K,L)=\max\{|\chi(K)-\chi(L)|,|\sigma_{\omega}(K)-\sigma_{\omega}(L)||\omega \in S^1 \}.$$
We observe that $\tau(T(a,b), T(c,d))=0$ implies $\{a,b\}=\{c,d\}$. Indeed, the first jump of the step function $\omega \mapsto \sigma_{\omega}(T(a,b))$ is at $\theta=\frac{1}{ab}$, which determines the product $ab$. The Euler characteristic $\chi(T(a,b))=-ab+a+b$ further determines the sum $a+b$, in turn the pair $\{a,b\}$. Before proving Theorem~1, let us mention two recursive formulas for the classical signature invariant of torus links, due to Gordon, Litherland and Murasugi~\cite{GLM}.
\begin{enumerate}
\item $\sigma(T(p,q+2p))=\sigma(T(p,q))-p^2+1$, if $p$ is odd, \\
\item $\sigma(T(p,q+2p))=\sigma(T(p,q))-p^2$, if $p$ is even. \\
\end{enumerate}
The following estimates are easy consequences of these.

\bigskip
\bigskip
\bigskip
\bigskip
\begin{lemma} \quad
\begin{enumerate} 
\item[(i)] $|\sigma(T(p,q))-\frac{pq}{2}-\frac{q}{2p}| \leq p$, for all $p \in 2\N+1$,\\
\item[(ii)] $|\sigma(T(p,q))-\frac{pq}{2}| \leq p$, for all $p \in 2\N$.
\end{enumerate}
\end{lemma}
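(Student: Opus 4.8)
The plan is to derive the lemma from the two Gordon--Litherland--Murasugi recursions together with the Brieskorn formula (\ref{signature}) specialised to $\omega=-1$, i.e.\ $\theta=\tfrac12$. Work in the convention in which $\sigma(T(p,q))\ge 0$ for positive torus links, and fix $p$, regarding $\sigma(T(p,q))$ as a function of $q$. The recursions say that along each residue class of $q$ modulo $2p$ this function is arithmetic, with \emph{exact} common difference $p^2-1$ for odd $p$ and $p^2$ for even $p$. Introduce the affine function $m_p$ whose $q$-slope equals the mean increment per unit length, namely $m_p(q)=\tfrac{pq}{2}-\tfrac{q}{2p}$ for odd $p$ (slope $\tfrac{p^2-1}{2p}$) and $m_p(q)=\tfrac{pq}{2}$ for even $p$ (slope $\tfrac{p^2}{2p}$); this is exactly the affine quantity the lemma compares $\sigma$ to. Because the slope matches the recursion increment exactly, the error $E(q)=\sigma(T(p,q))-m_p(q)$ is invariant under $q\mapsto q+2p$. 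It therefore suffices to bound $|E|$ on a single period $1\le q\le 2p$.

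On such a period I would compute $\sigma(T(p,q))$ row by row via (\ref{signature}). At $\theta=\tfrac12$ the weight $\epsilon_{1/2}(x,y)$ equals $-1$ exactly when $\tfrac xq+\tfrac yp\in(\tfrac12,\tfrac32)$, equals $+1$ when $\tfrac xq+\tfrac yp\in(0,\tfrac12)\cup(\tfrac32,2)$, and vanishes on the two lines $\tfrac xq+\tfrac yp\in\{\tfrac12,\tfrac32\}$. For each fixed row $y\in\{1,\dots,p-1\}$ the band $\tfrac xq\in(\tfrac12-\tfrac yp,\tfrac32-\tfrac yp)$ meets $(0,1)$ in a single interval with exactly \emph{one} interior endpoint, the other being clipped at $0$ or $1$ according to whether $\tfrac yp$ exceeds $\tfrac12$. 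Hence the signed row sum $s(y)=\sum_x\epsilon_{1/2}(x,y)$ reduces to a one-interval lattice-point count, so it differs from its length-based value $-\tfrac{2q}{p}\min(y,p-y)$ by strictly less than $1$.

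Summing over the $p-1$ rows, the main terms reproduce $-m_p(q)$ precisely: the identity $\sum_{y=1}^{p-1}\min(y,p-y)=\tfrac{p^2-1}{4}$ for odd $p$ yields the correction $\tfrac{q}{2p}$, while $\sum_{y=1}^{p-1}\min(y,p-y)=\tfrac{p^2}{4}$ for even $p$ (with the middle row $y=\tfrac p2$ evaluated exactly) gives no correction. The accumulated discrepancy is then at most $(p-1)\cdot 1<p$, which is the claim. The \emph{main obstacle} is precisely this base-case estimate: one must verify that every band genuinely contributes a single interval (the clipping at the edges of the rectangle), account for the finitely many lattice points lying exactly on the two lines where $\epsilon_{1/2}=0$, and carry the parity of $p$ through the row count. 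I note finally that the row-by-row estimate in fact delivers the bound for all $q$ directly; the sole role of the recursions is to certify that the linear coefficient is exact, so that no error accumulates as $q$ grows, thereby legitimising the reduction to one period.
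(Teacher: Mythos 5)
Your proposal is correct, but it takes a genuinely different route from the paper's. The paper argues softly: the signature restricted to the braid group $B_p$ is a quasimorphism of defect $p$ (because $\sigma$ bounds the cobordism distance from below and the two factors of a product braid can be separated by a cobordism made of $p$ saddles); the Gordon--Litherland--Murasugi recursions give the limit slope of $n \mapsto \sigma(T(p,n))$; and a general fact about quasimorphisms on $\Z$ --- a quasimorphism of defect $D$ with limit slope $a$ satisfies $|\varphi(n)-an|\leq D$ --- finishes the argument. You instead evaluate $\sigma=\sigma_{-1}$ directly from Brieskorn's formula~(\ref{signature}) at $\theta=\tfrac{1}{2}$ by a row-by-row lattice-point count. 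Your key estimate, which you rightly flag as the main obstacle, does hold: for $y\neq p/2$ the signed row sum equals $2\lfloor t\rfloor+1-q$, where $t$ is the interior endpoint of the band scaled by $q$ (and equals $2t-q$ exactly when $t$ is an integer, since that lattice point receives weight $0$), hence it differs from $-\tfrac{2q}{p}\min(y,p-y)$ by strictly less than $1$, while the middle row for even $p$ contributes error exactly $1$; the accumulated error is therefore at most $p-1$. As you observe yourself, the periodicity reduction via the recursions is then redundant, since the count works for all $q$. The trade-off: the paper's route is shorter and never touches the Brieskorn formula, whereas yours is self-contained given~(\ref{signature}), yields the marginally sharper constant $p-1$, and extends verbatim to other $\omega$-signatures --- which is in fact the flavour of computation the paper carries out later in case~(iii) of the proof of Theorem~1. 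Two small points of hygiene: the centering you use, $q\,\tfrac{p^2-1}{2p}=\tfrac{pq}{2}-\tfrac{q}{2p}$ for odd $p$, is the one dictated by the limit slope appearing in the paper's own proof, so your silent correction of the printed statement (which subtracts $\tfrac{pq}{2}+\tfrac{q}{2p}$) is the right reading; and you should fix one sign convention throughout, since formula~(\ref{signature}) makes $\sigma$ of positive torus links negative while you declare it nonnegative.
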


\begin{proof}
By the recursive formulas,
$$\lim_{n \to \infty} \frac{1}{n} \sigma(T(p,n))=\frac{p^2-1}{2p}, \text{ if $p$ is odd},$$
$$\lim_{n \to \infty} \frac{1}{n} \sigma(T(p,n))=\frac{p}{2}, \text{ if $p$ is even},$$
in accordance with the lemma's statement. Moreover, the restriction of the signature invariant to the braid group $B_p$ with $p$ strands is a quasimorphism of defect $p$, i.e. 
$$|\sigma(\alpha \beta)-\sigma(\alpha)-\sigma(\beta)| \leq p,$$
for all braids $\alpha$, $\beta \in B_p$. Here by $\sigma(x)$ we mean the signature of the closure of the braid $x$. This follows from the fact that the signature invariant provides a lower bound for the cobordism distance of links and that there is an elementary cobordism of Euler characteristic $p$ separating the two factors of the braid $\alpha \beta$ (more details on the defect of this quasimorphism can be found in~\cite{GG}). In particular, the map $n \mapsto \sigma(T(p,n))$ is a quasimorphism on $\Z$. The required inequality now follows from a general feature of quasimorphisms on $\Z$: let $\varphi: \Z \to \Z$ be a quasimorphism of defect $D>0$ with $\lim_{n \to \infty} \frac{1}{n} \varphi(n)=a \in \R$. Then 
$$|\varphi(n)-an| \leq D$$
holds for all $n \in \N$. Indeed, suppose there exists $m \in \N$ with $|\varphi(m)-am| > D$, say $\varphi(m)=am+D+\epsilon$, for some $\epsilon >0$. Then one easily deduces
$$\varphi(km) \geq akm+D+k\epsilon,$$
for all $k \in \N$, contradicting the assumption on the limit slope of $\varphi$ (interesting information on this can be found in~\cite{AC1}).
\end{proof}

The proof of Theorem~1 is divided into three cases, depending on the difference of the braid indices. Without loss of generality, we may assume 
$$2 \leq a \leq c, \, a \leq b, \, c \leq d.$$
In particular, the braid indices of the torus links $T(a,b)$, $T(c,d)$ are $a$ and $c$, respectively. 
The first case, $c=a$, is an easy consequence of the Thom conjecture. The second case, $c \geq a+2$ is covered by Proposition~1 and estimates based on Lemma~1 and the Thom conjecture. The remaining case, $c=a+1$, needs a special consideration; it is the only instance where the cobordism distance is not determined solely by the classical signature invariant and the Euler characteristic.

\begin{proof}[Proof of Theorem 1] \quad

\noindent
i) $c=a$.
In this case
$$d_{\chi}(T(a,b), T(c,d))=|\chi(T(a,b))-\chi(T(c,d))|,$$
thus the desired inequality is true with $\gamma=1$.





\bigskip
\noindent
ii) $c \geq a+2$.
We may suppose $b \geq d$, since otherwise $d_{\chi}(T(a,b), T(c,d))=|\chi(T(a,b))-\chi(T(c,d))|$. Under the additional hypothesis
$$d \geq \max\{c^3,120c^2\},$$
we will show that the cobordism distance is bounded above by a constant multiple of
$\max \{\Delta \chi, \Delta \sigma \}$, where $\Delta \chi=|\chi(T(a,b))-\chi(T(c,d))|$, $\Delta \sigma=|\sigma(T(a,b))-\sigma(T(c,d))|$.
The assumption $d \geq \max\{c^3,120c^2\}$ leaves out finitely many exceptional torus links of braid index $c$. 

Let $k,r \in \N$ be the unique natural numbers with $d=ka+r$, $0 \leq r \leq a-1$. Our estimate for $d_\chi=d_\chi(T(a,b),T(c,d))$ is based on the following sequence of cobordisms:
$$T(c,d) \stackrel{\mathcal{C}_1}{\longrightarrow} T(c,ka) \stackrel{\mathcal{C}_2}{\longrightarrow} T(kc,a) \stackrel{\mathcal{C}_3}{\longrightarrow} T(b,a).$$
The first cobordism removes a rectangle of $r \times (c-1)$ crossings; its Euler characteristic is $-r(c-1)$. By Proposition~1, the second cobordism can be chosen to have Euler characteristic $-(k-1)(c-a)$. The third cobordism removes or adds a rectangle of crossings, $|\chi(T(kc,a))-\chi(T(b,a))|$ in number. By the triangle inequality, this number is bounded above by $|\chi(T(a,b))-\chi(T(c,d))|+|\chi(T(c,d))-\chi(T(c,ka))|+|\chi(T(c,ka))-\chi(T(kc,a))|=\Delta \chi+r(c-1)+(k-1)(c-a)$. Summing up, we obtain $d_\chi \leq \Delta \chi+2r(c-1)+2(k-1)(c-a)$, in turn
$$d_\chi \leq \Delta \chi+2ac+2\frac{d}{a}(c-a).$$
By the additional hypothesis $d \geq c^3$, we are left with 
$$d_\chi \leq \Delta \chi+4\frac{d(c-a)}{a}.$$
It remains to show that $\frac{d(c-a)}{a}$ is bounded above by a constant multiple of $\max \{\Delta \chi, \Delta \sigma \}$. 

\smallskip
\noindent
If $\Delta \sigma \leq \frac{d(c-a)}{40a}$ then Lemma~1 implies $|ab-cd| \leq \Delta \sigma+|\frac{b}{a}-\frac{d}{c}|+2a+2c \leq \frac{d(c-a)}{40a}+\frac{b}{a}+4c$, thus
$$b \geq \frac{cd}{a}-\frac{b}{a^2}-\frac{d(c-a)}{40a^2}-\frac{4c}{a},$$
$$b \geq \frac{acd}{a^2+1}-\frac{d(c-a)}{40a^2}-\frac{4c}{a}.$$
Here we could have multiplied the last two terms by $\frac{a^2}{a^2+1}$, but this does not matter for us.
The fact that $b$ is roughly the same as $\frac{cd}{a}$ allows us to deduce a reasonable estimate for $\Delta \chi$:
\begin{equation*}
\begin{aligned}
\Delta \chi & \geq a+b-c-d-|ab-cd| \\
& \geq a+b-c-d-\frac{d(c-a)}{40a}-\frac{b}{a}-2a-2c \\
& \geq \frac{(a-1)}{a}b-d-\frac{d(c-a)}{40a}-4c 
\end{aligned}
\end{equation*}
The essential term of the last line is $\frac{(a-1)}{a}b-d$. By the above inequality for $b$, it is bounded below by $\frac{(a-1)cd-(a^2+1)d}{a^2+1}-\frac{d(c-a)}{40a^2}-\frac{4c}{a}$. Our goal is to bound the term $\frac{(a-1)cd-(a^2+1)d}{a^2+1}$ by a constant multiple of $\frac{d(c-a)}{a}$. This fails precisely if $(a-1)c-(a^2+1)<0$, i.e. if $c=a+1$, and in three more special cases: $(a,c)=(2,4)$, $(3,5)$ and $(2,5)$. In all other cases we have
$$\frac{(a-1)cd-(a^2+1)d}{a^2+1} \geq \frac{d(c-a)}{10a}$$
(the worst remaining case is $(a,c)=(2,6)$, in which equality holds). Altogether, we obtain:
$$\Delta \chi \geq \frac{d(c-a)}{10a}-\frac{d(c-a)}{40a^2}-\frac{4c}{a}-\frac{d(c-a)}{40a}-4c.$$
Under the additional hypothesis $d \geq 120c^2$, the term $\frac{4c}{a}+4c \leq 6c$ is bounded above by $\frac{d(c-a)}{40a}$; we are left with an estimate of the desired type:
$$\Delta \chi \geq \frac{d(c-a)}{40a}.$$ 
The three exceptional cases $(a,c)=(2,4)$, $(3,5)$ and $(2,5)$ can be treated by a more careful analysis of the classical signature invariant. Details are not interesting enough to be presented here.

\bigskip
\noindent
iii) $c=a+1$.
The same construction as in the previous case shows
$$d_\chi \leq \Delta \chi+4\frac{d(c-a)}{a}=\Delta \chi+\frac{4d}{a}.$$
However, the term $\frac{4d}{a}$ is not bounded above by a constant multiple of $\max \{\Delta \chi, \Delta \sigma \}$. The reason for that lies in the asymptotic behaviour of the ratio $\frac{\sigma}{\chi}$ for torus links of type $(p,n)$, where $n$ tends to infinity. These ratios can be determined via the recursive formulas for the classical signature invariant:
$$\lim_{n \to \infty} \frac{\sigma(T(p,n))}{\chi(T(p,n))}=\frac{p^2-1}{2p(p-1)}=\frac{p+1}{2p}, \text{ if $p$ is odd},$$
$$\lim_{n \to \infty} \frac{\sigma(T(p,n))}{\chi(T(p,n))}=\frac{p^2}{2p(p-1)}=\frac{p}{2(p-1)}, \text{ if $p$ is even}.$$
The limits coincide for $a=p$, $c=p+1$, provided $p$ is odd. We will therefore need the whole spectrum of $\omega$-signature invariants.

Using the sum formula~(\ref{signature}), we may analyse the coarse profile of the $\omega$-signature function $\sigma_{\omega}(T(p,n))$, for large numbers $n$ (compare~\cite{GG}). If $\theta$ is small, more precisely if $\theta < \frac{1}{p}$, the piecewise constant function $\theta \mapsto \sigma_{\exp(2 \pi i \theta)}(T(p,n))$ jumps by $-2$ at all multiples of $\frac{1}{pn}$ except at multiples of $\frac{p}{pn}=\frac{1}{n}$, where there are no jumps. This implies
$$\lim_{n \to \infty} \frac{1}{n} \, \sigma_{\exp(2 \pi i \frac{1}{p})}(T(p,n))=-\frac{2(p-1)}{p}.$$
For $\frac{1}{p} < \theta < \frac{2}{p}$, the value $\theta+\frac{x}{q}+\frac{y}{p}$ can exceed $2$. More precisely, the function $\theta \mapsto \sigma_{\exp(2 \pi i \theta)}(T(p,n))$ jumps by $2$ at intervals of length $\frac{1}{n}$, starting at $\theta=\frac{1}{p}+\frac{1}{n}$. This implies
$$\lim_{n \to \infty} \frac{1}{n} \, \sigma_{\exp(2 \pi i \frac{2}{p})}(T(p,n))=-2(\frac{p-1}{p}+\frac{p-3}{p}).$$
Continuing in this way, we see that the asymptotic profile of the $\omega$-signature function $\sigma_{\exp(2 \pi i \theta)}(T(p,n))$ between $\theta=0$ and $\theta=\frac{1}{2}$ is piecewise affine with consecutive slopes $-\frac{2(p-1)}{p}, -\frac{2(p-3)}{p}, -\frac{2(p-5)}{p}, \ldots$.
At this point we observe a fundamental difference between the profiles for even and odd numbers $p$: for even $p$, the slope of the final segment is $-\frac{2}{p}$, whereas for odd $p$, it is zero. 

\smallskip
\noindent
Assuming that $a$ is odd (for notational convenience), we obtain
$$|\sigma_{\exp(2\pi i (\frac{1}{2}-\frac{1}{a+1}))}(T(a,b))-\sigma_{\exp(\pi i)}(T(a,b))| \leq a,$$
$$|\sigma_{\exp(2\pi i (\frac{1}{2}-\frac{1}{a+1}))}(T(a+1,d))-\sigma_{\exp(\pi i)}(T(a+1,d))| \geq \frac{2d}{a+1}-(a+1).$$
As a consequence, there exists an $\omega$ ($\exp(2\pi i (\frac{1}{2}-\frac{1}{a+1}))$ or $\exp(\pi i)$, to be precise) with
$$\Delta \sigma_\omega=\sigma_\omega(T(a,b))-\sigma_\omega(T(a+1,d)) \geq \frac{d}{a+1}-(2a+1).$$
Under the additional hypothesis $d \geq 8c^2$, the term $2a+1$ is bounded above by $\frac{d}{4a}$, thus
$$\Delta \sigma_\omega \geq \frac{d}{4a},$$
which is precisely what we need.
\end{proof}

\section{Searching for the optimal linear bound}

The purpose of this section is to show that the linear bound of Theorem~2,
$$f(a,b,c,d) \leq 2(a+b+c+d),$$
is almost optimal. Evidently we are only interested in bounds that respect all symmetries of the parameters, i.e. bounds that are invariant under exchanging $a$ and $b$, $c$ and $d$, as well as the pairs $\{a,b\}$ and $\{c,d\}$. The only `linear' bounds with these properties are of the form $\alpha (a+b+c+d)$ or $\beta |a+b-c-d|$, for constants $\alpha, \beta \in \R$. Let us first show $\alpha \geq \frac{1}{2}$, by looking at a particular family of examples.  

Fix a natural number $n$ and set $a=2$, $b=n^2+1$, $c=d=n+1$. Then the quantity $|\chi(T(a,b))-\chi(T(c,d))|$ is zero and the expression $\alpha (a+b+c+d)$ is essentially $\alpha n^2$, for large $n$. Up to a linear error in $n$, the values of the classical signature invariant on the links $T(2, n^2+1)$ and $T(n+1,n+1)$ are $-n^2$ resp. $-\frac{n^2}{2}$. Thus the cobordism distance of the links $T(2, n^2+1)$ and $T(n+1,n+1)$ is at least $\frac{n^2}{2}$, implying $\alpha \geq \frac{1}{2}$.

Finding a family of links that excludes all bounds of the form $\beta |a+b-c-d|$ is somewhat more subtle. \footnote{A careful inspection of the proof of Theorem~1 reveals that such a bound exists, provided we exclude finitely many exceptional torus links per braid index.}
Fix a natural number $n$ and set $a=6n$, $b=12n+1$, $c=6n+1$, $d=12n-1$. The quantity $|\chi(T(a,b))-\chi(T(c,d))|$ is again zero. Furthermore $a+b-c-d=1$, so the expression $\beta |a+b-c-d|$ coincides with $\beta$, for all $n$. The signature of the involved torus links can be computed by using recursive formulas~\cite{GLM}. 
The outcome is
$$\sigma(T(6n, 12n+1))=-36n^2,$$
$$\sigma(T(6n+1, 12n-1))=-36n^2-4n.$$
The difference exceeds every constant $\beta$.

\section{Stable $4$-genus}

The smooth $4$-genus $g_4(K)$ of a knot $K \subset S^3$ is the minimal genus among all smooth oriented surfaces without closed components in the $4$-ball with boundary $K$. Contrary to the classical genus, the $4$-genus is not additive, but sub-additive, under the connected sum of knots:
$$g_4(K \# L) \leq g_4(K)+g_4(L),$$
for all knots $K,L \subset S^3$. In~\cite{Li}, Livingston introduced the stable $4$-genus of a knot $K$ as
$$g_{st}(K)=\lim_{n \to \infty} \frac{g_4(nK)}{n} \in \R.$$
He further showed that the natural extension of the stable $4$-genus to the rationalised concordance group $\mathcal{C}_{\Q}$ is a semi-norm. It is instructive to study this semi-norm on small sub-groups of $\mathcal{C}_{\Q}$, for example sub-groups spanned by pairs of knots. This was done by Livingston for certain pairs of torus knots. Here we will explain why the restriction of the stable $4$-genus to the span of pairs of torus knots has flat unit balls.

Let $T(a,b)$, $T(c,d)$ be two torus knots with positive parameters $a,b,c,d$. We interpret a point $(x,y) \in \Q^2$ as the element
$$xT(a,b)+yT(c,d) \in \mathcal{C}_{\Q}.$$
By the Thom conjecture, the stable $4$-genus of a positive torus knot of type $(p,q)$ is $g_{st}(T(p,q))=\frac{1}{2}(p-1)(q-1)$. Thus the points $(1,0)$ and $(0,1)$ have norms $\frac{1}{2}(a-1)(b-1)$ and $\frac{1}{2}(c-1)(d-1)$, respectively. Now let us compute the norm of the points $(-1,1)$ and $(1,-1)$. By Theorem~2, there exists a smooth cobordism of genus at most
$$\frac{1}{2}|(a-1)(b-1)-(c-1)(d-1)|+a+b+c+d$$
between the two knots $T(a,b)$ and $T(c,d)$ (the genus of a cobordism between two knots equals half the absolute value of its Euler characteristic). This is then an upper bound for the norm of the points $(-1,1)$ and $(1,-1)$. Assuming that the difference of the genera of the knots $T(a,b)$ and $T(c,d)$ is small compared to the quantity $a+b+c+d$, this implies that the unit ball with respect to the stable $4$-genus is flat, with a long diameter along the line of slope $-1$.

\begin{example}
Let $(a,b,c,d)=(5,8,4,11)$. We have
$$g_{st}(T(5,8))=\frac{1}{2}(5-1)(8-1)=14,$$
$$g_{st}(T(4,11))=\frac{1}{2}(4-1)(11-1)=15.$$
By Proposition~1, there is a cobordism of Euler characteristic $-1$ between the knot $T(5,8)$ and the link $T(4,10)$. Further there is an obvious cobordism of Euler characteristic $-3$ between the link $T(4,10)$ and the knot $T(4,11)$. Altogether, there is a cobordism of Euler characteristic $-4$, i.e. of genus $2$, between the knots $T(5,8)$ and $T(4,11)$. The expected shape of the corresponding norm ball is sketched in Figure~5. It could actually be even flatter, if the cobordism distance between the knots $T(5,8)$ and $T(4,11)$ were two.
\end{example}

\begin{figure}[ht]
\scalebox{1.0}{\raisebox{-0pt}{$\vcenter{\hbox{\epsffile{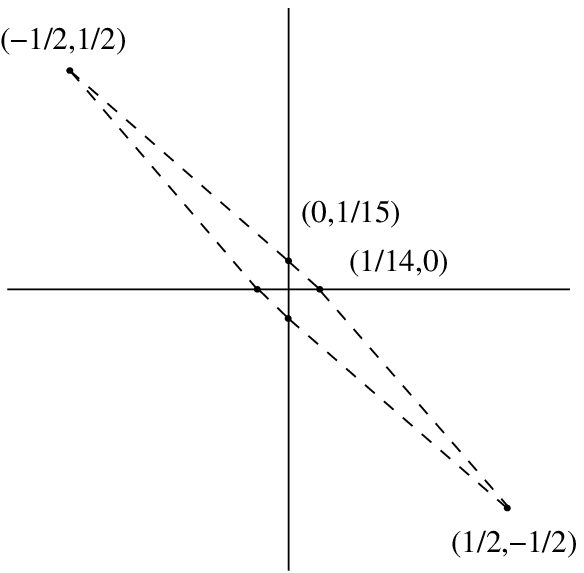}}}$}} 
\caption{}
\end{figure}

\bigskip
\noindent
Universit\"at Bern, Sidlerstrasse 5, CH-3012 Bern, Switzerland

\bigskip
\noindent
\texttt{sebastian.baader@math.unibe.ch}


\begin{thebibliography}{99} 

\bibitem{AC1}
     N.~A'Campo: \emph{A natural construction for the real numbers}, arXiv: math/ 0301015, 2003. 

\bibitem{Br}
     E.~Brieskorn: \emph{Beispiele zur Differentialtopologie von Singularit\"aten}, Invent. Math.~\textbf{2} (1966), 1-14.

\bibitem{GG}
     J.M.~Gambaudo, E.~Ghys: \emph{Braids and signatures}, Bull. Soc. Math. France~\textbf{133} (2005),  no.~4, 541-579. 

\bibitem{GLM}
     C.McA.~Gordon, R.A.~Litherland, K.~Murasugi: \emph{Signatures of covering links}, Canad. J.~Math \textbf{33} (1981), no.~2, 381-394.

\bibitem{HU}
     M.~Hirasawa, Y.~Uchida: \emph{The Gordian complex of knots}, J. Knot Theory Ramifications \textbf{11} (2002), no.~3, 363-368.

\bibitem{Ka}
     T.~Kawamura: \emph{On unknotting numbers and four-dimensional clasp numbers of links}, Proc. Amer. Math. Soc. \textbf{130} (2002), no.~1, 243-252.


\bibitem{KM}
     P.~B.~Kronheimer, T.~S.~Mrowka: \emph{The genus of embedded surfaces in the projective plane},  Math. Res. Lett. \textbf{1} (1994), no.~6, 797-808. 

\bibitem{Le}
     J.~Levine: \emph{Knot cobordism groups in codimension two}, Comment. Math. Helv. \textbf{44} (1969), 229-244.

\bibitem{Li}
     C.~Livingston: \emph{The stable 4-genus of knots}, arXiv:0904.3054, 2009.

\bibitem{Mu}
     K.~Murasugi: \emph{On a certain numerical invariant of link types}, Trans. Amer. Math. Soc.~\textbf{117} (1965), 387-422.

\bibitem{Ra}
     J.~Rasmussen: \emph{Khovanov homology and the slice genus}, Invent.~\textbf{182} (2010), 419-447, arXiv: math/0402131.

\bibitem{Tr}
     A.~C.~Tristram: \emph{Some cobordism invariants for links}, Proc. Cambridge Philos. Soc. \textbf{66} (1969), 251-264.  

\end{thebibliography}
\end{document}